\documentclass[english]{amsart}
\usepackage[T1]{fontenc}
\usepackage[latin9]{inputenc}
\usepackage{geometry}
\geometry{verbose,letterpaper,tmargin=3cm,bmargin=3cm,lmargin=2.5cm,rmargin=2.5cm}
\usepackage{amsmath}
\usepackage{amsthm}
\usepackage{amssymb}
\usepackage{mathtools}
\usepackage[all]{xy}

\usepackage{mathrsfs}
\usepackage{hyperref}
\usepackage{color}
\hypersetup{colorlinks=true,citecolor=blue}

\usepackage{babel}

\theoremstyle{plain}
\newtheorem{thm}{Theorem}[section]
\newtheorem{lemm}[thm]{Lemma}
\newtheorem{prop}[thm]{Proposition}
\newtheorem{cor}[thm]{Corollary}

\newtheorem{qq}[thm]{Question}

\theoremstyle{definition}
\newtheorem{defn}[thm]{Definition}
\newtheorem{example}[thm]{Example}
\newtheorem{construction}[thm]{Construction}

\theoremstyle{remark}
\newtheorem*{rmk}{Remark}
\newtheorem*{claim}{Claim}


\DeclareMathOperator{\Spec}{Spec}

\DeclareMathOperator{\Spf}{Spf}

\DeclareMathOperator{\Sp}{Sp}

\DeclareMathOperator{\Alb}{Alb}
\DeclareMathOperator{\Pic}{Pic}
\DeclareMathOperator{\Frac}{Frac}

\newcommand{\et}{\mathrm{\acute{e}t}}

\newcommand{\cO}{\mathcal{O}}

\begin{document}

\title{Line bundles on rigid varieties and Hodge symmetry}

\author{David Hansen and Shizhang Li}

\begin{abstract} We prove several related results on the low-degree Hodge
  numbers of proper smooth rigid analytic varieties over non-archimedean fields.
  Our arguments rely on known structure theorems for the relevant Picard
  varieties, together with recent advances in $p$-adic Hodge theory. We also
  define a rigid analytic Albanese naturally associated with any smooth proper
  rigid space.
\end{abstract}

\maketitle

\section{Introduction}

Let $K$ be a $p$-adic field, i.e.~a complete discretely valued extension of
$\mathbf{Q}_p$ with perfect residue field $\kappa$\footnote{Note that the
  perfectness assumption of the residue field here is not essential, as Hodge
  numbers doesn't change under ground field extension.}. Let $X$ be a smooth
proper rigid analytic space over $K$.
Among fundamental invariants associated with $X$ are its Hodge numbers
$h^{i,j} \coloneqq \dim_K H^j(\Omega_X^i)$.
 In this paper, we study the relationship
between the Hodge numbers $h^{1,0}$ and $h^{0,1}$ of $X$.

For a compact complex manifold $Y$, we always have $\dim H^1(Y,\cO_Y) \geq \dim
{H^0(Y,\Omega^1_Y)}^{d=0}$ (c.f.~\cite[Chapter IV, Section 2]{BHPV}). In the
rigid analytic setting, Scholze proved that the Hodge--de~Rham spectral sequence
always degenerates at $E_1$, and in particular every global 1-form on $X$ as
above is automatically closed (c.f.~\cite[Theorem 8.4]{Scholze}). One is
naturally led to guess that for $X$ as above we always have $\dim H^1(X,\cO_X)
\geq \dim H^0(X,\Omega^1_X)$. In this paper we confirm this inequality assuming
that $X$ has a strictly semistable formal model (c.f.~\cite[Section 1]{HL}) and
provide a geometric interpretation of the difference.

\begin{thm}[Main Theorem]
\label{Main Theorem}
Under the conditions stated above, we have
\[
  \dim H^1(X,\cO_X) \geq \dim H^0(X,\Omega^1_X).
\]
Moreover, the difference between two numbers above is the virtual torus rank of
the Picard variety of $X$ (to be defined in the next section).
\end{thm}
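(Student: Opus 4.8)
The plan is to translate both sides into invariants of the Picard variety of $X$ and to bridge them using $p$-adic Hodge theory applied to $H^1_{\et}$. First I would invoke the representability and structure theory recalled in the next section: $\Pic_{X/K}$ is representable by a smooth rigid group (this already uses the existence of a strictly semistable formal model), its Lie algebra is $H^1(X,\cO_X)$, and hence
\[
\dim H^1(X,\cO_X)=\dim\Pic^0_{X/K}.
\]
So the theorem becomes equivalent to the assertion that $\dim H^0(X,\Omega^1_X)=\dim\Pic^0_{X/K}-r$, where $r$ denotes the virtual torus rank, together with the inequality $r\ge 0$.

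Second, I would compute $\dim_{\mathbf{Q}_p}H^1_{\et}(X_{\overline K},\mathbf{Q}_p)$ in two ways. On one hand, Scholze's Hodge--Tate decomposition for proper smooth rigid spaces \cite[Theorem 8.4]{Scholze} gives a (noncanonical) isomorphism $H^1_{\et}(X_{\overline K},\mathbf{Q}_p)\otimes_{\mathbf{Q}_p}\mathbf{C}_p\cong\bigl(H^1(X,\cO_X)\otimes_K\mathbf{C}_p\bigr)\oplus\bigl(H^0(X,\Omega^1_X)\otimes_K\mathbf{C}_p(-1)\bigr)$, so that $\dim_{\mathbf{Q}_p}H^1_{\et}(X_{\overline K},\mathbf{Q}_p)=\dim H^1(X,\cO_X)+\dim H^0(X,\Omega^1_X)$. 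On the other hand, the Kummer sequence $1\to\mu_{p^n}\to\mathbf{G}_m\xrightarrow{p^n}\mathbf{G}_m\to 1$ on $X_{\overline K,\et}$, together with the $p$-divisibility of $\cO(X_{\overline K})^\times=\overline K^\times$ (here using that $X$ is proper and connected), yields $H^1_{\et}(X_{\overline K},\mathbf{Z}/p^n(1))\cong\Pic(X_{\overline K})[p^n]$; passing to the limit and using that $\Pic/\Pic^0$ is finitely generated gives $H^1_{\et}(X_{\overline K},\mathbf{Z}_p(1))\cong T_p\Pic^0_{X/K}$, whence $\dim_{\mathbf{Q}_p}H^1_{\et}(X_{\overline K},\mathbf{Q}_p)=\operatorname{rank}_{\mathbf{Z}_p}T_p\Pic^0_{X/K}$. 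Comparing the two computations,
\[
\dim H^0(X,\Omega^1_X)=\operatorname{rank}_{\mathbf{Z}_p}T_p\Pic^0_{X/K}-\dim\Pic^0_{X/K}.
\]

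Third, I would feed in the structure of the Picard variety over the semistable model. The strictly semistable hypothesis guarantees (via the classical description of Picard varieties of semistable degenerations, together with the Raynaud-type passage to the rigid generic fiber) that $\Pic^0_{X/K}$ has ``no unipotent part'': it sits in an exact sequence of rigid groups $0\to\mathcal{T}\to\Pic^0_{X/K}\to\mathcal{B}\to 0$, with $\mathcal{B}$ an abeloid variety, say of dimension $g$, and $\mathcal{T}$ a rigid analytic torus of dimension $r$, where $r$ is exactly the virtual torus rank of $\Pic^0_{X/K}$ as defined in the next section. Since $\mathcal{T}(\overline K)$ is $p$-divisible, applying $T_p$ to this sequence on $\overline K$-points stays exact, and $\operatorname{rank}T_p\mathcal{T}=r$ while $\operatorname{rank}T_p\mathcal{B}=2g$ — the latter being the corresponding statement for abeloid varieties, which follows from their uniformization. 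Therefore $\operatorname{rank}T_p\Pic^0_{X/K}=2g+r$ and $\dim\Pic^0_{X/K}=g+r$; plugging into the displays above gives $\dim H^1(X,\cO_X)=g+r$, $\dim H^0(X,\Omega^1_X)=g$, so the difference is $r\ge 0$, as claimed. (The geometric content of this difference then matches the rigid analytic Albanese constructed later, whose target is built from the abeloid $\mathcal{B}$.)

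The main obstacle is the input used in the third step: establishing that $\Pic^0_{X/K}$ is a smooth rigid group of the stated semiabelian type with vanishing unipotent part, and identifying the dimension of its toric part $\mathcal{T}$ with the model-theoretic/combinatorial definition of the virtual torus rank (in particular its independence of the chosen semistable model). This is precisely where the strictly semistable hypothesis is genuinely used; by contrast, the representability statement in the first step and the two cohomological computations in the second step are essentially formal, relying only on Scholze's comparison theorem and the Kummer sequence.
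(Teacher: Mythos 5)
Your proposal is correct, but it reaches the conclusion by a somewhat different route than the paper. The paper never uses the identification $\dim H^1(X,\cO_X)=\dim\Pic^0_X$; instead it exploits the full \emph{Galois-equivariance} of the Hodge--Tate decomposition, reading off $h^{1,0}(X)=\dim_K(V_p(\Pic_X)\otimes_{\mathbb{Q}_p}\mathbf{C})^{G_K}$ and $h^{0,1}(X)=\dim_K(V_p(\Pic_X)\otimes_{\mathbb{Q}_p}\mathbf{C}(-1))^{G_K}$, and then computing these invariants from the extension $0\to\mathbb{Q}_p(1)^{r-k}\to V_p(\Pic^0_X)\to V_p(A)\to 0$ together with the Hodge--Tate comparison for the abeloid $A$. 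You instead use only the crudest consequence of the comparison theorem, namely $\dim_{\mathbb{Q}_p}H^1_{\et}(X_{\bar K},\mathbb{Q}_p)=h^{1,0}+h^{0,1}$, and compensate with the extra input that the Lie algebra of the Picard variety is $H^1(X,\cO_X)$, so that $h^{0,1}=\dim\Pic^0_X=g+\rho$ while $\operatorname{rank}T_p\Pic^0_X=2g+\rho$ forces $h^{1,0}=g$. Both arguments are valid and give the same geometric interpretation of the difference. Your version is more economical on the $p$-adic Hodge theory side (no equivariance, no decomposition of $V_p(A)$ needed), but it leans on a statement --- the identification of $\operatorname{Lie}\Pic_X$ with $H^1(X,\cO_X)$ --- which, while indeed part of Hartl--L\"utkebohmert's results, is not among the facts the paper records in its summary Theorem~\ref{structure of picard}, so you should cite it explicitly; note also that since the functor as stated is defined only on smooth test spaces, the tangent-space computation via $\Sp K[\epsilon]$ requires a word of justification. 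The paper's version, by contrast, needs nothing about $\Pic_X$ beyond the qualitative structure of its Tate module, which is why the authors remark that the argument ``only uses the qualitative structure of the Picard variety.''
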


\begin{rmk}
\leavevmode
\begin{enumerate}
\item Possessing strictly semistable
  reduction is stable under finite \'{e}tale base extension, hence the theorem is
  insensitive to finite unramified extensions of the ground field. 
  Since the central fiber of a strictly semistable formal model is generically smooth,
  by Hensel's lemma, we may and do
  assume that $X$ has a $K$-rational point $x: \Sp(K) \to X$. We will fix this
  rational point from now on.
\item The proof relies crucially on the assumption that $X$ has a strictly
  semistable formal model, which we use to determine the structure of the Picard
  variety of $X$, c.f.~Theorem~\ref{structure of picard} below. We certainly
  expect that the structure of the Picard variety should be of this shape in
  general. However, it is also a long standing folklore conjecture that any
  quasi-compact smooth rigid space potentially admits a strictly semistable
  formal model.
\item Assuming a result in progress by Conrad--Gabber along with the semistable
  reduction conjecture above, the Main Theorem holds for any smooth proper rigid
  space over any complete non-archimedean extension of $\mathbf{Q}_p$.
\end{enumerate}
\end{rmk}

In a complementary direction, the second author~\cite{ProperPicard} singled out
the class of smooth proper rigid spaces admitting some formal model with
\emph{projective} special fiber. In particular, according to Theorem 1.1 of
loc.~cit., the Picard variety of any such $X$ is automatically proper. Combining
this with L\"{u}tkebohmert's structure theorem (c.f.~\cite{abeloid}) for smooth
proper rigid groups and the comparison results of~\cite{Scholze}, we deduce the
following result.

\begin{thm}
\label{Main Corollary}
Let $X$ be a smooth proper rigid space over a $p$-adic field $K$. Assume that
$X$ has a formal model $\mathcal{X}$ over $\Spf(\mathcal{O}_K)$ whose special
fiber is projective. Then we have
\[
  h^{1,0}(X)=h^{0,1}(X).
\]
\end{thm}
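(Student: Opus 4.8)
The plan is to compute $\dim_{\mathbf{Q}_p} H^1_{\et}(X_{\overline K},\mathbf{Q}_p)$ in two different ways: once through the Hodge--Tate decomposition, and once through the geometry of the Picard variety. Throughout I would harmlessly pass to a finite extension of $K$ (which changes neither the $h^{i,j}$ nor the conclusion) so as to assume $X$ is geometrically connected.

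First, recall that the tangent space of the Picard functor at the identity is $H^1(X,\cO_X)$, and since we are in characteristic zero the Picard variety is smooth; hence $\dim \Pic^0_{X/K} = \dim H^1(X,\cO_X) = h^{0,1}(X)$. By Theorem~1.1 of \cite{ProperPicard}, the hypothesis that $\cX$ has projective special fiber forces $P := \Pic^0_{X/K}$ to be a proper rigid group, hence by L\"utkebohmert's structure theory \cite{abeloid} an abeloid variety. After a further finite extension it admits a uniformization $0 \to M \to E \to P \to 0$, with $E$ an extension of an abelian variety of good reduction by a split torus and $M$ a lattice of full rank; chasing $p$-power torsion through this short exact sequence shows that the $p$-adic Tate module $T_p P$ is free of rank $2\dim P = 2\,h^{0,1}(X)$.

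Next I would identify $H^1_{\et}(X_{\overline K},\mathbf{Z}_p(1))$ with $T_p P$. Running the Kummer sequence $0 \to \mu_{p^n} \to \mathbf{G}_m \to \mathbf{G}_m \to 0$ on $X_{\overline K}$, and using that the global units of a proper (geometrically connected) rigid space form a $p$-divisible group (they are just $\overline K^\times$), one gets $H^1_{\et}(X_{\overline K},\mu_{p^n}) \cong \Pic(X_{\overline K})[p^n]$. Passing to the inverse limit over $n$, and using that the N\'eron--Severi group is finitely generated and so contributes nothing to the Tate module, yields $H^1_{\et}(X_{\overline K},\mathbf{Z}_p(1)) \cong T_p P$; in particular $\dim_{\mathbf{Q}_p} H^1_{\et}(X_{\overline K},\mathbf{Q}_p) = 2\,h^{0,1}(X)$.

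Finally I would invoke \cite{Scholze}: the Hodge--de~Rham spectral sequence degenerates at $E_1$, and there is a Hodge--Tate decomposition $H^1_{\et}(X_{\overline K},\mathbf{Q}_p)\otimes_{\mathbf{Q}_p}\mathbf{C}_p \cong \bigl(H^1(X,\cO_X)\otimes_K\mathbf{C}_p\bigr)\oplus\bigl(H^0(X,\Omega^1_X)\otimes_K\mathbf{C}_p(-1)\bigr)$, whence $\dim_{\mathbf{Q}_p} H^1_{\et}(X_{\overline K},\mathbf{Q}_p) = h^{0,1}(X)+h^{1,0}(X)$. Comparing the two computations gives $h^{1,0}(X)=h^{0,1}(X)$. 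The only genuinely non-formal input is the combination of \cite{ProperPicard} and \cite{abeloid} --- that the Picard variety is proper, hence an abeloid variety whose Tate module has rank twice its dimension; granting this, the rest is a routine Kummer-theory and $p$-adic Hodge theory computation, the mild points requiring care being the $p$-divisibility of global units and the exactness needed to discard the N\'eron--Severi contribution to $T_pP$.
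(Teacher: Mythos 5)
Your argument is correct, but it reaches the conclusion by a different accounting than the paper does. Both proofs share the same skeleton: by \cite{ProperPicard} and L\"utkebohmert's structure theorem the Picard variety $P=\Pic^0_X$ is abeloid, the Kummer sequence (together with divisibility of $\overline{K}^\times$ and finite generation of the N\'eron--Severi group) identifies $H^1_{\et}(X_{\bar K},\mathbb{Q}_p(1))$ with $V_p(P)$, and Scholze's Hodge--Tate decomposition is the comparison input. Where you diverge is in how the two Hodge numbers are extracted. The paper never uses the total rank of $V_p(P)$: it computes $h^{1,0}$ and $h^{0,1}$ \emph{separately} as $\dim_K{(V_p(P)\otimes\mathbf{C})}^{G_K}$ and $\dim_K{(V_p(P)\otimes\mathbf{C}(-1))}^{G_K}$, using the Hodge--Tate decomposition for the abeloid variety itself; this yields the general identity $h^{0,1}-h^{1,0}=r-k$ of the Main Theorem, of which the Corollary is the case $r-k=0$. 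You instead count total dimensions: $\dim_{\mathbb{Q}_p}V_p(P)=2\dim P$ for an abeloid variety (your torsion chase through the uniformization is right), versus $h^{0,1}+h^{1,0}$ from Hodge--Tate. To close the loop you then need the additional identification $\dim P=\dim H^1(X,\cO_X)$, i.e.\ that the Lie algebra of the rigid Picard variety is $H^1(X,\cO_X)$. That is a standard deformation-theoretic fact and part of the foundational theory of the rigid Picard functor, but it is a genuine extra input that the paper's argument avoids entirely (the paper only ever uses the \emph{qualitative} structure of $\Pic^0_X$ and reads off both Hodge numbers from the Galois representation). The trade-off: your route is slightly more elementary on the $p$-adic Hodge theory side (no need to take Galois invariants of twists), while the paper's route is self-contained on the Picard side and generalizes immediately to the inequality with defect $r-k$ in the non-proper case.
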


\begin{rmk}
In this Theorem, we do not need to assume that \(X\) has potentially
  semistable reduction.
\end{rmk}

This result suggests that the condition of admitting a formal model with
projective reduction could be a natural rigid analytic analogue of the
K\"{a}hler condition. In particular, it is natural to ask if this condition
implies Hodge symmetry in higher degrees:

\begin{qq}
  Let $X$ be a smooth proper rigid space admitting a formal model with
  projective reduction. Is it true that $h^{i,j}(X)=h^{j,i}(X)$ for all $i,j$?
\end{qq}

By combining Theorem 1.2 and Serre duality, it is easy to see that this question
has an affirmative answer for rigid analytic surfaces.

\section{Preliminaries}
\label{Preliminaries}

In this section, we record some preliminary results from~\cite{HL} which will be
used in the proof of the Main Theorem. We remark that these results hold for
\emph{arbitrary} discretely valued non-archimedean fields $K$ (not necessarily an
extension of $\mathbf{Q}_p$). Throughout this section, $X$ will be a smooth
proper rigid space over such a $K$.

In the paper~\cite{HL}, L\"{u}tkebohmert and Hartl considered the Picard functor
\[
  \underline{\Pic}_{X/K}:(\text{Smooth rigid spaces over } K) \to (\text{Sets}), V
  \mapsto \underline{\Pic}_{X/K}(V)
\]
where
\[
  \underline{\Pic}_{X/K}(V)=\{\mathrm{Isomclass}(\mathcal{L},\lambda):
  \mathcal{L} \text{ a line bundle on } X \times_K V, \lambda: \mathcal{O}_V
  \xrightarrow{\sim} {(x,id)}^*\mathcal{L} \text{ an isomorphism}\}.
\]

Let us summarize several main statements of the paper mentioned above.

\begin{thm}[Summary of Theorem 0.1, Proposition 3.13, Theorem 3.14 and Theorem
  3.15 of~\cite{HL}]
\label{structure of picard}
Assume $X$ has a strictly semistable formal model.
\begin{enumerate}
\item The functor above is represented by a smooth rigid group denoted as
  $\Pic_X$.
\end{enumerate}

After a suitable finite base extension we have:
\begin{enumerate}
  \setcounter{enumi}{1}
\item The identity component $\Pic^0_X$ of $\Pic_X$, i.e.~the \emph{Picard
    variety of $X$} in their terminology, canonically admits a Raynaud's
  uniformization:
  \[
    \xymatrix{
      & \Gamma \ar[d] & \\
      T \ar[r] & \hat{P} \ar[d] \ar[r] & B \\
      & \Pic^0_X }
  \]
  Here $T$ is a split torus of dimension $r$, $\Gamma$ is a lattice of rank $k
  (\leq r)$, and $B$ is a good reduction abeloid variety (c.f.~\cite{abeloid}),
  i.e. $B$ is the rigid generic fiber of a formal abelian scheme.
\item Non-canonically, $\Pic^0_X$ may be written as an extension of an abeloid
  variety by a split torus of dimension $r-k$\footnote{Although the way to write
    this extension is non-canonical, this number $r-k$ only depends on $X$}:
  \[
    0 \to \mathbb{G}_m^{r-k} \to \Pic^0_X \to A \to 0.
  \]
\item The geometric component group of $\Pic_X \times_{K}
  \mathbb{C}_K$\footnote{Here and elsewhere in this paper, we use $\mathbb{C}_K
    \coloneqq \widehat{\overline{K}}$ to denote the completion of an algebraic
    closure of $K$.}, i.e.~the \emph{N\'{e}ron--Severi group of $X$} in their
  terminology, is a finitely generated abelian group.
\end{enumerate}
\end{thm}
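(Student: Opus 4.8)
Since assertions (1)--(4) are extracted from the cited results of~\cite{HL}, the ``proof'' is ultimately a matter of collating those references; I will instead indicate the shape of the arguments and where the genuine work lies. For (1), the plan is to fix a strictly semistable formal model $\cX$ of $X$ over $\Spf\cO_K$ and to realize $\underline{\Pic}_{X/K}$ as the rigid-analytic generic fiber of a formal Picard functor of $\cX$. Writing $\cX_s$ for the special fiber --- a reduced proper $\kappa$-scheme with simple normal crossings --- and $\cX_n$ for the thickening $\cX\otimes_{\cO_K}\cO_K/\pi^{n+1}$, one uses that the Picard functor of $\cX_s$ is representable (by a group scheme locally of finite type over $\kappa$, since $\cX_s$ is proper, geometrically connected and reduced, and carries a rational point) and that a line bundle on $\cX_n$ lifts to $\cX_{n+1}$ up to an obstruction in $H^2(\cX_s,\cO_{\cX_s})$, the lifts forming a torsor under $H^1(\cX_s,\cO_{\cX_s})$; here the strictly semistable hypothesis is exactly what keeps both the geometry of $\cX_s$ and this coherent deformation theory under control. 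Grothendieck's existence theorem then promotes a compatible system of line bundles on the $\cX_n$ to a line bundle on $\cX$, so that the formal Picard functor is representable by a formal group whose rigid generic fiber represents $\underline{\Pic}_{X/K}$; rigidifying at $x$ makes this canonical, and the same deformation analysis yields smoothness.

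For (2) and (3), after a finite base extension --- needed both to split the torus and to obtain a semistable formal model of $\Pic^0_X$ itself --- one invokes the uniformization theory for smooth connected commutative rigid groups of~\cite{abeloid}: the reduction of a suitable formal model of $\Pic^0_X$ is a semiabelian variety, namely an extension of an abelian variety --- whose canonical lift is the good-reduction abeloid $B$, assembled from the Picard varieties of the irreducible components of $\cX_s$ --- by a split torus of some rank $r$ read off from the reduction. Lifting this torus to $T\subset\hat P$, with $\hat P$ a semiabelian extension of $B$ by $T$, and dividing by the monodromy lattice $\Gamma\subset\hat P$ of rank $k\le r$ yields the presentation $\Pic^0_X=\hat P/\Gamma$ of the diagram in (2). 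Assertion (3) is then formal: one selects a subtorus $T'\subseteq T$ of rank $r-k$ such that $\Gamma$ maps isomorphically onto a full lattice in $E:=\hat P/T'$, whence $A:=E/\Gamma$ is proper and therefore abeloid; pushing out along $T\twoheadrightarrow T/T'\cong\mathbb{G}_m^{r-k}$ then gives the extension $0\to\mathbb{G}_m^{r-k}\to\Pic^0_X\to A\to 0$, and the choice of $T'$ is precisely the source of the non-canonicity.

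For (4), the geometric component group $\mathrm{NS}(X_{\overline K})=\Pic_{X_{\overline K}}/\Pic^0_{X_{\overline K}}$ is, through the formal model, governed by the component group of the Picard scheme of the proper $\kappa$-scheme $\cX_s$ --- finitely generated by the theorem of the base --- while the lattice $\Gamma$ sits inside $\Pic^0_X$ and contributes nothing to it; alternatively, finiteness of its rank follows from the finiteness of $H^2_{\et}(X_{\overline K},\mathbf{Z}_\ell(1))$ via the first Chern class. The step I expect to be the main obstacle is (1), namely representing the Picard functor without assuming the formal model projective. This is exactly why strict semistability --- rather than mere properness of the reduction --- is imposed: it is what makes the special-fiber Picard functor and the attendant coherent deformation theory manageable. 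Granting (1) together with the semiabelian structure of the special-fiber Picard variety, the remaining assertions (2)--(4) follow from the abeloid uniformization machinery of~\cite{abeloid} in an essentially formal way.
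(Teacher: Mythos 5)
This theorem is stated in the paper purely as a summary of Theorem 0.1, Proposition 3.13 and Theorems 3.14--3.15 of~\cite{HL}; the paper supplies no proof of its own, so your sketch can only be measured against the actual arguments of Hartl--L\"{u}tkebohmert. Your derivation of (3) from (2) is the right idea, modulo a bookkeeping slip: the subtorus $T'\subseteq T$ of rank $r-k$ is the \emph{kernel} of $\Pic^0_X=\hat P/\Gamma\to (\hat P/T')/\Gamma=A$, i.e.\ it is $T'$ itself that appears as the $\mathbb{G}_m^{r-k}$ in the extension; pushing out along $T\twoheadrightarrow T/T'$ would instead produce a torus of rank $k$. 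Your two suggestions for (4) are both plausible routes to finite generation of the N\'{e}ron--Severi group.

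The genuine gap is in (1), and it propagates into your route to (2). The rigid Picard functor is \emph{not} the generic fiber of a formal Picard functor of $\cX$: already for a Tate elliptic curve $E=\mathbb{G}_m^{\mathrm{rig}}/q^{\mathbb{Z}}$ with its standard semistable model, $\Pic^0_E\cong E$ is proper of dimension $1$, whereas the deformation-theoretic construction you describe (lift line bundles through the thickenings $\cX_n$, apply Grothendieck existence) only produces the $\pi$-adic completion of $\mathbb{G}_m$, whose rigid generic fiber is the affinoid unit circle --- a quasi-compact open subgroup of $\Pic^0_E$, not $\Pic^0_E$. The underlying point is that a line bundle on $X\times_K V$ for a general smooth rigid $V$ need not extend to a formal model, so representability of $\underline{\Pic}_{X/K}$ cannot be reduced to representability of the formal Picard functor. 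In~\cite{HL} the group $\hat P$ (with its full rigid-analytic torus $T=\mathbb{G}_m^r$, the rank $r$ read off from the combinatorics of the special fiber) and the lattice $\Gamma$ (recording multidegrees on the components) are built directly, and the Raynaud uniformization in (2) is the \emph{output} of the construction of $\Pic^0_X$, not a structure theorem applied to an already-constructed group. Relatedly, you cannot simply ``invoke the uniformization theory of~\cite{abeloid}'' for (2): L\"{u}tkebohmert's structure theorem there concerns \emph{proper} rigid groups, and $\Pic^0_X$ is proper only when $k=r$. So representability and uniformization must be established together, and as written your sketch of (1) would at best exhibit a quasi-compact open subgroup of the Picard variety.
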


\begin{defn}
\label{virtual torus rank}
The \textit{virtual torus rank} of $\Pic^0_X$ is defined to be $r-k$ in the
notation above.
\end{defn}

It is easy to derive the following structural properties of the Tate module of
$\Pic_X$.

\begin{prop}
\label{structure of Tate module}
\leavevmode
\begin{enumerate}
\item The Tate module of $\Pic_X$ is the same as that of $\Pic^0_X$.
\item There are two canonical short exact sequences of $p$-adic $G_K$
  representations:
  \[
    0 \to V_p(T) \to V_p(\hat{P}) \to V_p(B) \to 0,
  \]
  \[
    0 \to V_p(\hat{P}) \to V_p(\Pic^0_X)=V_p(\Pic_X) \to
    \varprojlim(\Gamma/p^n\Gamma) \otimes_{\mathbb{Z}_p} \mathbb{Q}_p \to 0.
  \]
  Here $V_p(G)=\varprojlim G(\overline{K})[p^n] \otimes_{\mathbb{Z}_p} \mathbb{Q}_p$ denotes
  the rational $p$-adic Tate module associated with any commutative rigid
  analytic group $G$.
\item There is a non-canonical short exact sequence of $p$-adic $G_K$
  representations:
  \[
    0 \to {\mathbb{Q}_p(1)}^{r-k} \to V_p(\Pic^0_X) \to V_p(A) \to 0.
  \]
\end{enumerate}
\end{prop}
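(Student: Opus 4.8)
The plan is to deduce all three statements from the structural results recorded in Theorem~\ref{structure of picard} by applying the $p^n$-torsion functor to the relevant short exact sequences of commutative rigid analytic groups and passing to the inverse limit over $n$; recall that $V_p(G) = T_p(G)\otimes_{\mathbb{Z}_p}\mathbb{Q}_p$ with $T_p(G) = \varprojlim_n G[p^n]$, the transition maps being multiplication by $p$. First I would isolate the elementary mechanism used throughout. Given a short exact sequence $0 \to G' \to G \to G'' \to 0$ of commutative rigid groups (exact for the topology underlying the Picard functor) in which $G$ is divisible --- i.e. $[p^n]\colon G \to G$ is an epimorphism for all $n$ --- the snake lemma applied to multiplication by $p^n$ yields a four-term exact sequence $0 \to G'[p^n] \to G[p^n] \to G''[p^n] \to G'/p^nG' \to 0$, the terms $G/p^nG$ and $G''/p^nG''$ vanishing by divisibility of $G$. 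In each of our applications one of the two outer terms is zero --- either $G'$ is torsion-free, so $G'[p^n] = 0$, or $G'$ is itself divisible, so $G'/p^nG' = 0$ --- leaving a short exact sequence of finite abelian groups at every level. Such inverse systems are Mittag--Leffler, so $\varprojlim_n$ is exact, and tensoring with $\mathbb{Q}_p$ produces a short exact sequence of $\mathbb{Q}_p$-vector spaces; it carries a compatible continuous $G_K$-action because, after the finite base extension of Theorem~\ref{structure of picard}, every group and map involved is defined over $K$. The divisibility inputs are the only genuinely geometric facts needed, and they are standard: split tori $\mathbb{G}_m^m$, good reduction abeloid varieties, and any extension of a good reduction abeloid by a split torus (in particular $\hat{P}$) are all divisible, $[p^n]$ being a finite flat surjection in each case; hence $\Pic^0_X$, an extension of an abeloid by a torus (Theorem~\ref{structure of picard}(3)), is divisible too. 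One also uses the classical identification $T_p(\mathbb{G}_m^m) = \mathbb{Z}_p(1)^m$.

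Granting this, the three parts are immediate. For (1), apply the mechanism to $0 \to \Pic^0_X \to \Pic_X \to \Pic_X/\Pic^0_X \to 0$ using divisibility of $\Pic^0_X$; since the geometric component group is finitely generated (Theorem~\ref{structure of picard}(4)), the quotient $\Pic_X/\Pic^0_X$ has bounded $p$-power torsion, so $V_p(\Pic_X/\Pic^0_X) = 0$ and $V_p(\Pic^0_X) \xrightarrow{\sim} V_p(\Pic_X)$. For the first sequence of (2), apply the mechanism to $0 \to T \to \hat{P} \to B \to 0$ (both $\hat{P}$ and the subgroup $T$ being divisible), obtaining $0 \to V_p(T) \to V_p(\hat{P}) \to V_p(B) \to 0$ with $V_p(T) = \mathbb{Q}_p(1)^r$. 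For the second sequence of (2), apply it to $0 \to \Gamma \to \hat{P} \to \Pic^0_X \to 0$: here $\Gamma \cong \mathbb{Z}^k$ is torsion-free and $\hat{P}$ is divisible, so the level-$n$ sequence collapses to $0 \to \hat{P}[p^n] \to \Pic^0_X[p^n] \to \Gamma/p^n\Gamma \to 0$; passing to the limit and using (1) to identify $V_p(\Pic^0_X) = V_p(\Pic_X)$ gives the asserted sequence with right-hand term $\varprojlim_n(\Gamma/p^n\Gamma)\otimes_{\mathbb{Z}_p}\mathbb{Q}_p$. For (3), apply the mechanism to the non-canonical extension $0 \to \mathbb{G}_m^{r-k} \to \Pic^0_X \to A \to 0$ of Theorem~\ref{structure of picard}(3), using $V_p(\mathbb{G}_m^{r-k}) = \mathbb{Q}_p(1)^{r-k}$.

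Since the argument is essentially formal once Theorem~\ref{structure of picard} is available, I do not expect a serious obstacle; the only ingredients beyond bookkeeping are the divisibility statements above and --- most crucially --- the portion of the uniformization in Theorem~\ref{structure of picard}(2) asserting that $\hat{P} \to \Pic^0_X$ is an epimorphism with kernel exactly the lattice $\Gamma$, since this is what makes $0 \to \Gamma \to \hat{P} \to \Pic^0_X \to 0$ short exact in the first place. A secondary point worth a moment's attention is that the inverse system $\{\Gamma/p^n\Gamma\}$ arising from the snake-lemma connecting maps carries the natural surjective transition maps, so its limit is the $p$-adic completion $\Gamma\otimes_{\mathbb{Z}}\mathbb{Z}_p$; this is how the right-hand term of the second sequence in (2) is to be read.
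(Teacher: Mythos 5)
Your proposal is correct and follows exactly the route the paper intends: the paper's own proof is just the one-line observation that (1), (2), (3) follow from parts (4), (2), (3) of Theorem~\ref{structure of picard}, and your snake-lemma/divisibility mechanism is precisely the standard elaboration of that claim. The details you supply (divisibility of $\hat{P}$ and $\Pic^0_X$, Mittag--Leffler, vanishing of $V_p$ of the finitely generated component group, and the identification of the transition maps on $\Gamma/p^n\Gamma$) are all accurate.
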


\begin{proof}
  (1) follows from Theorem~\ref{structure of picard} (4), while (2) and (3) are
  consequences of Theorem~\ref{structure of picard} (2) and (3), respectively.
\end{proof}

\section{Proof of the Main Theorem}
Now we specialize the results from Section~\ref{Preliminaries} to the situation
where $K$ is of mixed characteristic (i.e.~an extension of $\mathbb{Q}_p$). With
the aid of Proposition~\ref{structure of Tate module} and Hodge--Tate
comparison, it is easy to prove the Main Theorem.

\begin{proof}[Proof of Theorem~\ref{Main Theorem}]
  By Hodge--Tate comparison for smooth proper rigid spaces over $K$
  (c.f.~\cite[Theorem 7.11]{Scholze}), we have a canonical $G_K$-equivariant
  isomorphism
  \[
    H^1_{\et}(X_{\mathbb{C}_K},\mathbb{Q}_p(1)) \otimes_{\mathbb{Q}_p}
    \mathbb{C}_K=(H^1(X,\cO_X) \otimes_K \mathbb{C}_K(1)) \oplus (H^0(X,\Omega^1_X)
    \otimes_K \mathbb{C}_K),
  \]
  where $\mathbb{C}_K$ is the completion of an algebraic closure of $K$. On the
  other hand, by the usual Kummer sequence we have
  \[
    H^1_{\et}(X_{\mathbb{C}_K},\mathbb{Q}_p(1))=V_p(\Pic_X).
  \]
  Combining these isomorphisms with Hodge--Tate comparison for $A$ and the
  structural results for $V_p(\Pic_X)$ described in Proposition~\ref{structure
    of Tate module} (3), we see that
  \[
    \dim_K {(V_p(\Pic_X) \otimes_{\mathbb{Q}_p}
      \mathbb{C}_K)}^{G_K}=\dim_K{(V_p(A) \otimes_{\mathbb{Q}_p}
      \mathbb{C}_K)}^{G_K}=\dim_K(H^1(\hat{A},\cO_{\hat{A}}))=\dim(\hat{A})=\dim(A)
  \]
  and similarly
  \[
    \dim_K{(V_p(\Pic_X)\otimes_{\mathbb{Q}_p}
      \mathbb{C}_K(-1))}^{G_K}=r-k+\dim(A),
  \]
  where $r-k$ is the virtual torus rank of the Picard variety of $X$ as in
  Definition~\ref{virtual torus rank}. By Hodge--Tate comparison for $X$, the
  former is $h^{1,0}(X)$ and the latter is $h^{0,1}(X)$, so taking the
  difference gives $h^{0,1}(X)-h^{1,0}(X)=r-k$, as desired.
\end{proof}

One sees that the argument above only uses the qualitative structure of the
Picard variety. Similarly, it is easy to prove Theorem~\ref{Main Corollary}.

\begin{proof}[Proof of Theorem~\ref{Main Corollary}]
\cite[Theorem 1.1]{ProperPicard} says that in this situation the Picard
  variety is an abeloid variety. Therefore $r-k=0$, so the argument above
  implies the equality $h^{0,1}(X)=h^{1,0}(X)$.
\end{proof}

\begin{rmk}
By a result in progress of Conrad--Gabber we may generalize this Theorem
  to the situation where $K$ is an arbitrary non-archimedean field extension of
  \(\mathbf{Q}_p\). Indeed, let \(\mathcal{X}\) be a formal model of \(X\) with
  projective special fibre \(\mathcal{X}_0 \subset \mathbb{P}^n_k\). Then there
  exists \(\pi \in \mathfrak{m}\) such that \(p \in \pi \mathcal{O}_K\) and
  \(\mathcal{X}_{\pi} \coloneqq \mathcal{X} 
  \times_{\mathrm{Spf}(\mathcal{O}_K)} \mathrm{Spec}(\mathcal{O}_K/\pi)\)
  is also a projective variety over
  \(\mathrm{Spec}(\mathcal{O}_K/\pi)\)
  \[
    \xymatrix{
      \mathcal{X}_{\pi} \ar@{^{(}->}[rr] \ar[rd] & &
      \mathbb{P}^n_{\mathcal{O}_K/\pi} \ar[ld] \\
      & \mathrm{Spec}(\mathcal{O}_K/\pi). &
    }
  \]
  By a standard argument, there exists a finite type \(\mathbb{F}_p\)-algebra
  \(A\) with a morphism \(\phi \colon A \to \mathcal{O}_K/\pi\) and a diagram such
  that the diagram over \(\mathcal{O}_K/\pi\) is pullback along \(\phi\)
  \[
    \xymatrix{
      \mathcal{X}_{A} \ar@{^{(}->}[rr] \ar[rd] & &
      \mathbb{P}^n_A \ar[ld] \\
      & \mathrm{Spec}(A). &
    }
  \]
  Now the work of Conrad--Gabber would produce a proper flat family of formal
  schemes \(\mathcal{Y} \to \mathcal{Z}\) of topologically finite type over
  \(\mathbb{Z}_p\) whose reduction is a relative projective family
  \(\mathcal{X}_{U} \to U\) where \(U \subset \mathrm{Spec}(A)\) is a
  (non-empty) open and whose generic fibre \(Y \to Z\) is a proper smooth family
  of rigid spaces having \(X\) as one of the ``geometric fibres''. Applying
  Theorem~\ref{Main Corollary} to the family \(Y \to Z\) yields the equality of
  degree one hodge numbers of \(X\).
\end{rmk}

\section{The Albanese}

In this section we define another rigid group variety related to ``$1$-motives
of rigid spaces'', namely the rigid Albanese variety. We work in the slightly
more general setting where $X$ is a smooth proper rigid space over any complete
non-archimedean field $K$ of characteristic $0$ (not necessarily discretely
valued); as before, we fix a rational point $x: \Sp(K) \to X$. The only
non-formal input we require is the existence of the Picard variety associated
with $X$ in this generality, which is guaranteed by the work of Warner,
c.f.~\cite{Warner}.

\begin{defn}
  The rigid Albanese variety $(\mathcal{A},0)$ associated with $(X,x)$ is the
  initial object in the category of pointed maps from $(X,x)$ to an abeloid
  variety pointed at its origin.
\end{defn}

If no confusion seems likely, we call $\mathcal{A}$ the Albanese variety of $X$
and denote it by $\Alb_X$. If $\Alb_X$ exists, it is clearly unique up to
canonical isomorphism. In order to prove the existence of the Albanese, we
employ the Picard variety as follows:

\begin{construction}
  Let $\mathcal{A}$ be the dual of the maximal
  connected proper smooth subgroup of the Picard variety of $X$.\footnote{In our
    situation where the ground field has characteristic $0$, one can drop the
    smoothness in this definition.}
\end{construction}

Note that the maximal connected proper smooth subgroup of any commutative rigid
analytic group is well-defined; this is an easy exercise which we leave to the
reader. 
The existence and properties of the dual of an abeloid variety is provided
by~\cite[Corollary II.2]{abeloid}.

\begin{prop}
  The abeloid variety $\mathcal{A}$ constructed above is the Albanese of $X$.
\end{prop}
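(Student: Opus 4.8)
The plan is to verify the universal property of the Albanese by exhibiting a canonical morphism $\mathrm{alb}_X \colon (X,x) \to (\mathcal{A},0)$ and checking that it is initial among pointed maps to pointed abeloid varieties. First I would set up the two dualities in play. On the one hand, by Theorem~\ref{structure of picard} the Picard functor is representable by a smooth rigid group $\Pic_X$, and its identity component $\Pic^0_X$ admits a Raynaud uniformization; the maximal connected proper smooth subgroup $B' \subseteq \Pic^0_X$ is then precisely the abeloid quotient-by-nothing piece, which (after the allowed finite base change, or using Warner's more general representability) is an abeloid variety. We set $\mathcal{A} := (B')^\vee$, the dual abeloid variety. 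On the other hand, for any abeloid variety $C$ pointed at its origin, Cartier/biduality for abeloid varieties (L\"utkebohmert, \cite{abeloid}) identifies $\Hom_{\mathrm{pt}}((X,x),(C,0))$ with $\Hom_{\mathrm{abeloid grp}}(C^\vee, \Pic^0_X)$: a pointed map $f\colon X \to C$ pulls back the Poincar\'e bundle on $C \times C^\vee$, rigidified along $x$, to give a map $C^\vee \to \Pic_X$ landing in $\Pic^0_X$ (connectedness of $C^\vee$), and conversely. This is the rigid-analytic incarnation of the classical fact that $\Hom(X,C) = \Hom(C^\vee, \Pic^0_X)$.

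Next I would observe that since $C^\vee$ is a connected proper smooth rigid group, any homomorphism $C^\vee \to \Pic^0_X$ automatically factors through the maximal such subgroup $B'$; conversely every homomorphism $C^\vee \to B'$ composes to one into $\Pic^0_X$. Hence
\[
  \Hom_{\mathrm{pt}}((X,x),(C,0)) \;\cong\; \Hom_{\mathrm{grp}}(C^\vee, B') \;\cong\; \Hom_{\mathrm{grp}}(B'^\vee, C) \;=\; \Hom_{\mathrm{grp}}(\mathcal{A}, C),
\]
where the middle isomorphism is biduality for abeloid varieties. To promote this to the statement that $\mathcal{A}$ is the \emph{Albanese}, I need a distinguished pointed map $\mathrm{alb}_X \colon (X,x) \to (\mathcal{A},0)$ inducing these bijections by post-composition. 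This comes out of applying the displayed identification with $C = \mathcal{A}$: the identity homomorphism $\mathcal{A} \to \mathcal{A}$ on the right corresponds to a pointed map $X \to \mathcal{A}$ on the left, and one checks — by naturality of the Poincar\'e-bundle construction in $C$ — that for general $C$ the bijection above is exactly $g \mapsto g \circ \mathrm{alb}_X$. This is the universal property, so $(\mathcal{A}, \mathrm{alb}_X)$ is initial, i.e.\ the Albanese.

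The main obstacle is making the duality dictionary fully rigorous in the rigid-analytic (and, for the general $K$, not-necessarily-discretely-valued) setting: one must know that $\Pic^0_X$ has a well-behaved abeloid quotient whose dual exists, that biduality $B' \cong (B'^\vee)^\vee$ holds for abeloid varieties, and that the Poincar\'e bundle on $C \times C^\vee$ with its rigidification induces the claimed functorial bijection. For abeloid varieties all of this is available from L\"utkebohmert's theory \cite{abeloid}, and the only genuinely new point is the extraction of the maximal connected proper smooth subgroup — but that was flagged above as an easy exercise, and its functoriality (any group map from a proper connected smooth group lands inside it) is immediate. A secondary, more bookkeeping-level issue is compatibility with base change: the identity component $\Pic^0_X$ and its abeloid part are only described after a finite extension of $K$ in Theorem~\ref{structure of picard}, so strictly one first constructs $\mathcal{A}$ and $\mathrm{alb}_X$ over a finite extension $K'/K$ and then descends, using uniqueness of the Albanese together with Galois descent for the map $\mathrm{alb}_X$ (whose target, being characterized by a universal property, carries a canonical descent datum). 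I expect this descent to be routine once the universal property over $K'$ is in hand.
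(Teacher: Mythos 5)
Your proposal is correct and follows essentially the same route as the paper: both arguments hinge on pulling back the Poincar\'e bundle, observing that the resulting homomorphism from the (connected, proper, smooth) dual $C^\vee$ into $\Pic^0_X$ must factor through the maximal connected proper smooth subgroup, and then dualizing; your restriction of the Poincar\'e bundle corresponding to the identity of $\mathcal{A}$ is exactly the paper's construction of the map $\Alb\colon X \to \mathcal{A}$. The only difference is presentational --- you package the argument as a natural bijection $\Hom_{\mathrm{pt}}((X,x),(C,0)) \cong \Hom(\mathcal{A},C)$ and extract the Albanese map from the identity, whereas the paper constructs $\Alb$ directly and then verifies the factorization $\phi = \widetilde{\phi}\circ\Alb$ for each $\phi$.
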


This Proposition is of no surprise, the corresponding versions in the scheme
case is well-known and can be found in~\cite[Theorem 3.3(iii)]{TDTE}. The proof
we give below is adapted from that in loc.~cit.

\begin{proof}
  To see that $\mathcal{A}$ has the correct universal property, note that the
  Poincar\'{e} bundle on $X \times \Pic^0_X$ restricts to a line bundle on $X
  \times \widehat{\mathcal{A}}$. Therefore we have a morphism $\Alb: X \to
  \mathcal{A}$. As the Poincar\'{e} bundle is trivialized along $\{x\} \times
  \Pic^0_X$, we know that $\Alb(x)=0$. Now, given any pointed morphism $\phi:
  (X,x) \to (\mathcal{A}',0)$, we may consider the line bundle ${(\phi \times
    id_{\widehat{\mathcal{A}'}})}^*\mathcal{L}$ on $X \times
  \widehat{\mathcal{A}'}$, the pullback of the Poincar\'{e} bundle $\mathcal{L}$
  on $\mathcal{A}' \times \widehat{\mathcal{A}'}$, which gives rise to a
  morphism $\hat{\phi}: \widehat{\mathcal{A}'} \to \Pic^0_X$. Since
  $\widehat{\mathcal{A}'}$ is proper and smooth, this morphism necessarily lands
  in $\widehat{\mathcal{A}}$, hence gives rise to $\hat{\phi}:
  \widehat{\mathcal{A}'} \to \widehat{\mathcal{A}}$. The dual of this morphism
  gives rise to a homomorphism $\widetilde{\phi}: \mathcal{A} \to \mathcal{A}'$.
  Using functoriality of Picard and (double)-duality of abeloid varieties,
  c.f.~\cite[Section 6.3]{Lut16}, we see that $\widetilde{\phi}$ as constructed
  above is canonical and $\phi=\widetilde{\phi} \circ \Alb$. This completes the
  proof.
\end{proof}

The Albanese property implies that the induced map between the first \'{e}tale
cohomology groups is injective. Before stating the result, recall that for a
rigid space $X$ over a non-archimedean field $K$ and any \'{e}tale sheaf
$\mathcal{F}$ on $X_{\'{e}t}$, the ``geometric'' \'{e}tale cohomology is defined
by $H^i_{\et}(X_{\bar{K}},\mathcal{F}) \coloneqq \varinjlim
H^1_{\et}(X_{L},\mathcal{F})$. It is a theorem of de Jong--van der Put that if
$X$ is quasi-compact, then we have $H^i_{\et}(X_{\bar{K}},\mathcal{F}) \cong
H^i_{\et}(X_{\mathbb{C}_K},\mathcal{F})$ as
$\mathrm{Gal}(K^{\mathrm{sep}}/K)$-modules (c.f.~\cite[Lemma 3.7.1 and Theorem
3.7.3]{dJvdP}).

\begin{prop}
\label{injectivity}
For any prime $l$ (which can be taken to be $p$), the natural map $\Alb^*:
H^1_{\et}(\Alb_{X,\bar{K}},\mathbb{Z}_l) \to
H^1_{\et}(X_{\bar{K}},\mathbb{Z}_l)$ is injective, and similarly for
$\mathbb{Q}_l$-coefficients.
\end{prop}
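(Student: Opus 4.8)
The plan is to deduce injectivity from the universal property of the Albanese together with a surjectivity statement for Picard varieties in $H^1$, by a duality argument. First I would recall that, by the Kummer sequence, $H^1_{\et}(X_{\bar K},\mathbb{Z}_l(1))$ is the $l$-adic Tate module of $\Pic_X$, and since $\Pic_X$ and $\Pic^0_X$ share the same Tate module (Proposition~\ref{structure of Tate module}(1)), this equals $V_l(\Pic^0_X)$ (integrally, $T_l\Pic^0_X$). The same identification applies with $X$ replaced by the abeloid $\widehat{\mathcal{A}}$: we have $H^1_{\et}(\widehat{\mathcal{A}}_{\bar K},\mathbb{Z}_l(1)) = T_l\Pic^0_{\widehat{\mathcal{A}}}$. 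For abeloid varieties the Picard variety of $\widehat{\mathcal{A}}$ is the dual abeloid $\mathcal{A}$ (L\"utkebohmert's theory), so $H^1_{\et}(\widehat{\mathcal{A}}_{\bar K},\mathbb{Z}_l(1)) = T_l\mathcal{A}$.

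Next I would analyze the map induced on Tate modules by $\Alb\colon X\to\mathcal{A}$. Dually, functoriality of the Picard construction sends the dual map $\widehat{\mathcal{A}}\to X$-direction... more precisely, the morphism $\Alb$ induces, via pullback of line bundles, a homomorphism $\Pic^0_{\mathcal{A}}\to\Pic^0_X$; but $\Pic^0_{\mathcal{A}} = \widehat{\mathcal{A}}$, and by construction $\widehat{\mathcal{A}}$ is (the dual of) the maximal connected proper smooth subgroup of $\Pic^0_X$, so this homomorphism is the inclusion of that subgroup $\widehat{\mathcal{A}}\hookrightarrow\Pic^0_X$. The key point is that this inclusion is a closed immersion of rigid groups with no kernel, hence induces an injection on $l$-adic Tate modules $T_l\widehat{\mathcal{A}}\hookrightarrow T_l\Pic^0_X$ — a quotient of a lattice by a sublattice of the same kind has torsion-free cokernel after tensoring, and in any case on $\mathbb{Q}_l$-Tate modules an injection of groups gives an injection of $V_l$. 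Unwinding the identifications, this injection $T_l\widehat{\mathcal{A}}\hookrightarrow T_l\Pic^0_X$ is precisely (a Tate twist of) the map $\Alb^*$ on $H^1_{\et}$, up to identifying $H^1$ of an abeloid $B$ with $T_l$ of its dual versus $T_l$ of $B$ itself — which differ by a Tate twist but in a way compatible with everything in sight, since $H^1_{\et}(B,\mathbb{Z}_l) = (T_lB)^\vee = T_l\widehat{B}(-1)$ and likewise the Kummer identification for $X$ reads $H^1_{\et}(X,\mathbb{Z}_l) = T_l\Pic^0_X(-1)$.

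Concretely, the cleanest route: combine $H^1_{\et}(X_{\bar K},\mathbb{Z}_l(1)) = T_l\Pic^0_X$ with $H^1_{\et}(\Alb_{X,\bar K},\mathbb{Z}_l(1)) = T_l\widehat{\mathcal{A}}$, observe that under these identifications $\Alb^*$ is the Tate-module map induced by the inclusion $\widehat{\mathcal{A}}\hookrightarrow\Pic^0_X$ of the maximal connected proper smooth subgroup, and note that a closed immersion of smooth commutative rigid groups is injective on $T_l$ (its kernel is trivial, so $T_l$ of the kernel is trivial and the long exact sequence of Tate modules gives injectivity of $T_l\widehat{\mathcal{A}}\to T_l\Pic^0_X$; tensoring with $\mathbb{Q}_l$ preserves injectivity, handling both coefficient cases). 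Finally one passes back to untwisted $\mathbb{Z}_l$- and $\mathbb{Q}_l$-coefficients by the (Galois-equivariant, functorial) Tate twist, which does not affect injectivity.

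The main obstacle I anticipate is the bookkeeping in identifying $\Alb^*$ with the Tate-module map of the inclusion $\widehat{\mathcal{A}}\hookrightarrow\Pic^0_X$: one has to chase the Poincar\'e bundle through the construction of $\Alb$ (as in the previous proposition) and match it against the Kummer-sequence identification of $H^1_{\et}(-,\mathbb{Z}_l(1))$ with the Tate module of the Picard group, all while keeping the dualities and Tate twists straight. Once that compatibility is in hand, the injectivity itself is formal, since it reduces to the triviality of the kernel of a closed immersion of rigid groups together with left-exactness of $T_l(-)$ and of $-\otimes_{\mathbb{Z}_l}\mathbb{Q}_l$.
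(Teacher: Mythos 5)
Your argument is essentially correct, but it is a genuinely different proof from the one in the paper. The paper argues softly with torsors: it reduces to $\mathbb{F}_l$-coefficients, interprets a class $\xi\in H^1_{\et}(\Alb_{X,\bar K},\mathbb{F}_l)$ as an \'etale $\mathbb{F}_l$-torsor $\mathcal{B}\to\Alb_X$, invokes the Serre--Lang-type theorem that such a torsor over an abeloid variety is itself an abeloid variety, and then observes that if $\Alb^*\xi=0$ a section $X\to X\times_{\Alb_X}\mathcal{B}$ yields a pointed map $X\to\mathcal{B}$, which by the universal property of the Albanese factors through a section $\Alb_X\to\mathcal{B}$, forcing $\xi=0$. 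That route needs nothing about $\Pic_X$ beyond what is already used to construct $\Alb_X$, and it handles torsion coefficients directly and uniformly in $l$. Your route instead identifies $H^1_{\et}(-,\mathbb{Z}_l(1))$ with $T_l$ of the Picard variety via Kummer and recognizes $\Alb^*$ as $T_l$ of the closed immersion $\widehat{\mathcal{A}}\hookrightarrow\Pic^0_X$; this is a clean and more ``computational'' argument that makes the link with the Picard-variety picture explicit, and the key compatibility (that $\Alb^*\colon\Pic^0_{\mathcal{A}}\to\Pic^0_X$ is that inclusion) does follow from the Poincar\'e-bundle construction of $\Alb$ together with biduality for abeloids. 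Two small caveats: (i) you do not need, and in the generality of this section do not have, Proposition~\ref{structure of Tate module}(1) (it rests on the strictly semistable hypothesis); fortunately your map factors through $\Pic^0_X\subseteq\Pic_X$ and left-exactness of $T_l$ gives injectivity into $T_l\Pic_X$ regardless; (ii) you are quietly using the representability of $\Pic_{X/K}$, the identification $T_l\Pic_{\mathcal{A}}=T_l\widehat{\mathcal{A}}$ (finite generation of the N\'eron--Severi group of an abeloid), and the functoriality of the Kummer identification --- all reasonable and consistent with what the paper assumes elsewhere, but these are exactly the inputs the paper's own proof is structured to avoid.
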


\begin{proof}
  It suffices to show the injectivity for $\mathbb{F}_l$-coefficients. An
  element $\xi \in H^1_{\et}(\Alb_{X,\bar{K}},\mathbb{F}_l)$ is represented by
  an \'{e}tale $\mathbb{F}_l$-torsor $\mathcal{B}$ over $\Alb_{X,L}$ where $L$
  is a finite separable extension of $K$. In this situation $\mathcal{B}$ itself
  is automatically an abeloid variety (c.f.~\cite[p.~167]{abelian}\footnote{The
    proof of the analogous result for abelian varieties given in
    loc.~cit.~extends with almost no change to the setting of abeloid varieties,
    except that one has to use the rigid geometry version of rigidity lemma
    (c.f.~\cite[Lemma 7.1.2]{Lut16})}). Choose any class $\xi$ such that $\Alb^*
  \xi=0$, in which case $X \times_{\Alb_X} \mathcal{B}=\mathcal{B}'$ is a
  trivial $\mathbb{F}_l$-torsor over $X$ (possibly after passing to a finite
  extension of $K$; from now on we will ignore the issue of base change and the
  reader should think of every statement as potentially true). In particular, we
  can choose a section $\sigma: X \to \mathcal{B}'$ to the natural projection,
  as in the following diagram:
  \[
    \xymatrix{
      \mathcal{B}'=X \times_{\Alb_X} \mathcal{B} \ar[r] \ar[d] & \mathcal{B} \ar[d] \\
      X \ar[r] \ar@{-->}@/^/[u] \ar@{-->}[ur] & \Alb_X \ar@{-->}@/^/[u] \\
    }
  \]
  The section $\sigma$ gives rise to a morphism from $X$ to $\mathcal{B}$ which
  can be chosen so that $x$ is sent to $0$. By the universal property of the
  Albanese we then get a section $\widetilde{\sigma}: \Alb_X \to \mathcal{B}$;
  but this just means that $\xi=0$, as desired.
\end{proof}

If the residue field $\kappa$ of \(K\) is of characteristic $0$, then
by~\cite[Theorem 1.18]{semistable}\footnote{The authors would like to thank
  Professor Michael Temkin for pointing this reference to us in a private
  communication.} $X$ is potentially semistable. Therefore the discussion in
Section~\ref{Preliminaries} applies automatically; in particular, by
Theorem~\ref{structure of picard} (3) we know that $\widehat{\Alb_X}$ has
dimension no bigger than that of the abeloid variety $A$ which appeared in the
aforesaid Theorem.

On the other hand, if $\kappa$ is of characteristic $p$, then we have no free
access to a structure theorem for the Picard variety anymore. Nevertheless, if
$K$ is a $p$-adic field we can still prove that the dimension of the Albanese is
no bigger than $h^{1,0}(X)$ by combining Proposition~\ref{injectivity} with a
little $p$-adic Hodge theory.

\begin{prop}
  If $X$ is a smooth proper rigid space over a $p$-adic field $K$, then we have
  \[
    \dim(\Alb_X) \leq \dim H^0(X,\Omega^1_X).
  \]
\end{prop}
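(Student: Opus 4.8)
The plan is to leverage Proposition~\ref{injectivity} together with Hodge--Tate comparison. First I would apply $p$-adic Hodge theory to the abeloid variety $\mathcal{A} = \Alb_X$: since $\widehat{\mathcal{A}}$ is the rigid generic fiber of a formal abelian scheme (being a proper smooth rigid group, hence abeloid with good reduction after a base change — but actually any abeloid has a well-understood Hodge--Tate decomposition), we have $H^1_{\et}(\mathcal{A}_{\bar K}, \mathbb{Q}_p)$ of dimension $2\dim(\mathcal{A})$, and its Hodge--Tate decomposition has $H^{1,0}$-part of dimension $\dim H^0(\mathcal{A}, \Omega^1_{\mathcal{A}}) = \dim(\mathcal{A})$ and $H^{0,1}$-part also of dimension $\dim(\mathcal{A})$ (Hodge symmetry holds for abeloid varieties since they have projective — in fact abelian — reduction, via Theorem~\ref{Main Corollary}, or more elementarily since $h^{1,0}=h^{0,1}=\dim$ for abelian varieties and this is preserved under the analytification/specialization).

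Next I would use the $G_K$-equivariance and strictness of the Hodge--Tate filtration. By Proposition~\ref{injectivity}, $\Alb^*: H^1_{\et}(\mathcal{A}_{\bar K}, \mathbb{Q}_p) \hookrightarrow H^1_{\et}(X_{\bar K}, \mathbb{Q}_p)$ is an injection of $G_K$-representations. Tensoring with $\mathbf{C}$ and taking Hodge--Tate graded pieces, the map on the $\Gr^0$ (equivalently, the $\mathbf{C}$-part, i.e.~the piece computing $H^1(-,\cO)$ in Scholze's normalization) is still injective — because the Hodge--Tate decomposition is functorial and a $G_K$-equivariant injection of $\mathbf{C}$-semilinear representations that are sums of $\mathbf{C}(j)$'s stays injective on each isotypic component (one recovers $\Gr^j$ as $(V\otimes\mathbf{C}(-j))^{G_K}\otimes\mathbf{C}$, and taking $G_K$-invariants is left exact). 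Hence $\dim H^1(\mathcal{A}, \cO_{\mathcal{A}}) \leq \dim H^1(X, \cO_X)$. Combined with Hodge symmetry for $\mathcal{A}$, this gives $\dim(\mathcal{A}) = h^{0,1}(\mathcal{A}) = h^{1,0}(\mathcal{A}) \leq$ ... wait — I actually want to compare with $h^{1,0}(X) = \dim H^0(X,\Omega^1_X)$, so I should instead look at the other graded piece: the $\mathbf{C}(1)$-isotypic part of $H^1_{\et}(-,\mathbb{Q}_p(1))$, or equivalently argue that $\Alb^*$ on $H^0(-,\Omega^1)$ is injective, which again follows by applying $(-\otimes\mathbf{C})^{G_K}$ to the $\mathbb{Q}_p$-linear injection after the appropriate Tate twist, yielding $\dim H^0(\mathcal{A},\Omega^1_{\mathcal{A}}) \leq \dim H^0(X,\Omega^1_X)$, i.e.~$\dim(\mathcal{A}) \leq h^{1,0}(X)$.

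So the key steps, in order: (1) record that $\widehat{\mathcal{A}}$ is a (good reduction, after base change) abeloid, compute $h^{1,0}(\mathcal{A}) = h^{0,1}(\mathcal{A}) = \dim(\mathcal{A})$ via Hodge--Tate theory for abeloids (or cite Theorem~\ref{Main Corollary}); (2) invoke Proposition~\ref{injectivity} to get the $G_K$-equivariant injection on $H^1_{\et}$ with $\mathbb{Q}_p$-coefficients; (3) apply the functor $V \mapsto (V \otimes_{\mathbb{Q}_p} \mathbf{C})^{G_K}$, which is left exact, to the suitably twisted injection, and use Hodge--Tate comparison (Scholze) for both $X$ and $\mathcal{A}$ to identify the source and target with $H^0(\mathcal{A},\Omega^1_{\mathcal{A}})$ and $H^0(X,\Omega^1_X)$ respectively; (4) conclude $\dim(\Alb_X) = \dim(\mathcal{A}) = h^{1,0}(\mathcal{A}) \leq h^{1,0}(X)$. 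The main obstacle I anticipate is step (1) in the case where $\kappa$ has characteristic $p$ and we have no structure theorem for $\Pic_X$ — but here I note that $\mathcal{A}$ is by construction proper smooth, so $\mathcal{A}$ itself is abeloid, and abeloid varieties over $p$-adic fields always have good reduction after finite base extension by L\"utkebohmert (c.f.~\cite{abeloid}), so one genuinely reduces to the formal abelian scheme case; alternatively, since $\mathcal{A}$ has projective reduction after base change, Theorem~\ref{Main Corollary} applies directly and gives $h^{1,0}(\mathcal{A}) = h^{0,1}(\mathcal{A})$ without needing to unwind Hodge--Tate theory by hand. A minor technical point to be careful about is that the isomorphism in Proposition~\ref{injectivity} is only claimed with $\mathbb{Z}_l$ or $\mathbb{Q}_l$ coefficients and the injectivity must survive after $\otimes \mathbf{C}$ and taking Galois invariants — but this is exactly the left-exactness remark above, so no real difficulty there.
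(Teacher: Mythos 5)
Your argument is correct and is essentially the paper's own proof: apply Proposition~\ref{injectivity}, tensor with $\mathbf{C}$, extract the Hodge--Tate weight $1$ isotypic piece (which is left exact, e.g.~via $V \mapsto (V\otimes_{\mathbb{Q}_p}\mathbf{C}(-1))^{G_K}$), and identify the two sides with $H^0(\Alb_X,\Omega^1)$ and $H^0(X,\Omega^1_X)$ by Scholze's comparison. One side remark is false and should be dropped: abeloid varieties do \emph{not} always acquire good reduction after a finite extension (the Tate curve is a counterexample; L\"utkebohmert only gives a Raynaud-type uniformization), so you cannot invoke Theorem~\ref{Main Corollary} for $\mathcal{A}$ that way --- but this is harmless, since the only fact you need is $h^{1,0}(\mathcal{A})=\dim\mathcal{A}$, which holds for any abeloid because $\Omega^1_{\mathcal{A}}\cong\cO_{\mathcal{A}}^{\dim\mathcal{A}}$ and $H^0(\mathcal{A},\cO_{\mathcal{A}})=K$.
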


\begin{proof}
  Proposition~\ref{injectivity} implies that the dimension of the Hodge--Tate
  weight $1$ (we follow the convention that $\mathbb{Q}_p(1)$ has Hodge--Tate
  weight $-1$) piece of $H^1_{\et}(\Alb_{X,\mathbb{C}_K},\mathbb{Q}_p)
  \otimes_{\mathbb{Q}_p} \mathbb{C}_K$ is at most that of
  $H^1_{\et}(X_{\mathbb{C}_K},\mathbb{Q}_p) \otimes_{\mathbb{Q}_p}
  \mathbb{C}_K$. One uses Hodge--Tate comparison again to see that the former is
  the dimension of $\Alb_X$ and the latter is $\dim H^0(X,\Omega^1_X)$
\end{proof}

Hoping that the map from $X$ to its Albanese could be given in terms of
``integration of $1$-forms on $X$'', one might na\"{i}vely speculate that the
dimension of $\Alb_X$ coincides with $h^{1,0}(X)$. However, we will see in the
next section that this fails in general; see Example~\ref{example} for an
explicit counterexample.

\section{Examples}

\begin{example}
  Let $A$ be an abeloid variety of dimension $d$ over a discretely valued
  non-archimedean field. Its Picard variety is an abeloid variety of the same
  dimension $d$. Then we have $h^{1,0}=h^{0,1}=d$ and the Albanese of $A$ is of
  course $A$ itself. The behavior of abeloid varieties is basically the same as
  abelian varieties according to~\cite[Theorem II]{abeloid}.
\end{example}

\begin{example}
\label{Hopf}
Let $H$ be a non-archimedean Hopf variety (c.f.~\cite{Hopfs} and~\cite{Hopfv}).
Then its Picard variety is $\mathbb{G}_m$. We have $h^{1,0}=0$ and $h^{0,1}=1$.
Since there is no non-constant morphism from a proper rigid variety to
$\mathbb{G}_m$ we see that the Albanese of $H$ is trivial.

There is a geometric explanation for why Hopf varieties should have trivial
Albanese.

\begin{prop}
  Let $K$ be a discretely valued non-archimedean field. There is no non-constant
  map from $\mathbb{A}^{1,\mathrm{rig}}_K$ to any Abeloid variety $A$.\footnote{This Proposition was essentially proved by W. Cherry in~\cite{W94} with the same proof (he only mentions abelian varieties instead of general abeloids, but the proof is identical in the abeloid case). We thank the referee for pointing this out to us.}
\end{prop}

\begin{proof}
  Applying~\cite[Theorem II]{abeloid} (after possibly passing to a finite
  separable extension of $K$), using the notations in loc.~cit., we may assume
  that \(A\) has a topological covering given by a smooth rigid group \(E\)
  sitting in an exact sequence of smooth rigid groups:
  \[
    0 \to T \to E \to B \to 0.
  \]
  Here \(T\) is a finite product of copies of \(\mathbb{G}_m^{\mathrm{rig}}\),
  and \(B\) is the generic fiber of a formal abelian scheme \(\mathcal{B}\) over
  \(\mathcal{O}_K\). By~\cite[Section 2]{abeloid} \(\mathcal{B}\) is the
  N\'{e}ron Model of \(B\). In particular, for any admissible smooth formal
  scheme \(\mathcal{X}\) (whose generic fiber is denoted as \(X\)) over
  \(\mathcal{O}_K\), any morphism from \(X\) to \(B\) extends uniquely to
  a morphism from \(\mathcal{X}\) to \(\mathcal{B}\). We would like to make the
  following:

  \begin{claim}
    Any map $\mathbb{A}^{1,\mathrm{rig}} \to A$ can be lifted to
    $\mathbb{A}^{1,\mathrm{rig}} \to E$.
  \end{claim}

  \begin{proof}
    By~\cite[Corollary 6.3.4]{Lut16}, it suffices to prove that
    $H^1(\mathbb{A}^{1,\mathrm{rig}},\mathbb{Z}) = 0$. One checks easily that
    the sheaf $\mathbb{Z}$ is overconvergent (c.f.~\cite[Definition after Lemma
    18]{Schneider}\footnote{Note that the concept of overconvergent is called
      conservative in loc.~cit.}). By~\cite[Corollary 20 (ii)]{Schneider}, it
    suffices to show that the associated Berkovich space of
    $\mathbb{A}^{1,\mathrm{rig}}$, denoted as
    $\mathcal{M}(\mathbb{A}^{1,\mathrm{rig}})$ in loc.~cit. (see
    also~\cite[Corollary 7.1.11]{FvdP} which compares the associated Berkovich
    space constructed by Berkovich and the site constructed
    in~\cite{Schneider}), is simply connected. Actually a stronger statement is
    true, namely the associated Berkovich space is contractible, due
    to~\cite[Theorem 6.1.5]{Berkovich}.
  \end{proof}

  Now it suffices to show that any morphism from $\mathbb{A}^{1,\mathrm{rig}}$
  to $B$ or $\mathbb{G}_m^{\mathrm{rig}}$ must necessarily be constant. The
  latter being well known, we shall just prove the former.

  We claim that any map \(f\) from \(\mathbb{A}^{1,\mathrm{rig}}\) to \(B\) is
  trivial. To see this, choose an increasing nested sequence of closed discs
  $D_i \subset \mathbb{A}^{1,\mathrm{rig}}$ which admissibly cover
  $\mathbb{A}^{1,\mathrm{rig}}$, and view $f$ as the limit of its restrictions
  to the $D_i$'s. Now, closed discs have obvious smooth formal models with
  special fiber \(\mathbb{A}^1_{\kappa}\). By the N\'{e}ron mapping property,
  any map from a closed disc to \(B\) would extend to a map from such a smooth
  formal model to \(\mathcal{B}\). Looking at the map on special fibers we get a
  map from a rational variety to an abelian variety, and any such map must be
  constant. Therefore our map \(f\) has image contained in an affinoid subspace
  of $B$. By the rigid analogue of Liouville's theorem (see
  Lemma~\ref{Liouville}) \(f\) must be constant.
\end{proof}

In the argument above, we used the following rigid analytic analogue of
Liouville's theorem.

\begin{lemm}
\label{Liouville}
There is no non-constant morphism from the analytification of a $K$-variety to a
$K$-affinoid space.
\end{lemm}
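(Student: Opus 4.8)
The plan is to first reduce to a statement about bounded functions. A morphism $f\colon V^{\mathrm{an}}\to \Sp(B)$ corresponds to a continuous $K$-algebra homomorphism $B\to \cO(V^{\mathrm{an}})$, and choosing a surjection $K\langle x_1,\dots,x_n\rangle \twoheadrightarrow B$ realizes $\Sp(B)$ as a closed subspace of the closed unit polydisc. Since the $x_i$ have supremum norm $\le 1$ on $\Sp(B)$, their images $g_i\in\cO(V^{\mathrm{an}})$ satisfy $\sup_y |g_i(y)|\le 1$, the supremum ranging over all points $y$ of $V^{\mathrm{an}}$. Hence it suffices to prove: every bounded global analytic function $g$ on $V^{\mathrm{an}}$ is constant (i.e.\ takes a single value, in a finite extension of $K$). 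Replacing $V$ by a connected component, and then by an irreducible component on which $g$ is still non-constant (possible since $V$ is connected and $g$ must agree on overlaps of components), we may assume $V$ is integral.

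Next I would reduce to curves. Suppose $g$ is bounded and non-constant; then it separates two closed points $p\ne q$ of $V^{\mathrm{an}}$. Using the standard fact that any two closed points of a positive-dimensional integral $K$-variety lie on an integral closed curve $C\subseteq V$, the restriction $g|_{C^{\mathrm{an}}}$ is still bounded and non-constant. Passing to the normalization $\widetilde C\to C$ and pulling back $g$, we may assume $V=\widetilde C$ is a regular integral affine curve over $K$ (after a harmless finite base extension we may even take it smooth and geometrically integral). Such a curve is an open subvariety of a smooth projective integral curve $\overline C$ whose complement is a finite set of closed points.

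The core of the argument is then a removable-singularity statement: a bounded analytic function on $\widetilde C^{\mathrm{an}}$ extends to $\overline C^{\mathrm{an}}$. This is local at each missing point $z_0\in\overline C\setminus\widetilde C$; after a finite extension of $K$ we may assume $z_0$ is $K$-rational, so a suitable analytic neighborhood of $z_0$ in $\overline C^{\mathrm{an}}$ is a closed disc centered at $z_0$ and the corresponding neighborhood of the puncture in $\widetilde C^{\mathrm{an}}$ is a punctured disc $\{0<|t|\le 1\}$. Expanding $g$ in a Laurent series $\sum_{m\in\mathbb Z} a_m t^m$ on this punctured disc, boundedness as $t\to 0$ forces $a_m=0$ for all $m<0$, so $g$ extends holomorphically across $z_0$. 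Having extended $g$ to the proper connected smooth rigid curve $\overline C^{\mathrm{an}}$, I would conclude using rigid analytic GAGA (or directly the fact that global functions on a connected proper rigid space over $K$ form a finite field extension of $K$): $\cO(\overline C^{\mathrm{an}})=\cO(\overline C)=H^0(\overline C,\cO_{\overline C})$ is a field finite over $K$, so $g$ is constant, a contradiction. The main obstacles I anticipate are the input that two closed points of a positive-dimensional variety lie on an integral curve (standard but not wholly trivial) and the bookkeeping at punctures that are not a priori rational or unramified; both are disposed of by finite base extensions, after which the Laurent-series computation is routine.
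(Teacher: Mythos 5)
Your argument is correct in outline, but it takes a genuinely different route from the paper's. You share the first reduction (a morphism to $\Sp(B)$ is determined by the images of the Tate-algebra generators, which have supremum norm $\le 1$, so everything comes down to showing bounded analytic functions on an integral affine $K$-variety are constant), but from there the paper proceeds purely algebraically: Noether normalization realizes $R$ as finite over $K[x_1,\ldots,x_n]$, one passes to a Galois closure, and the elementary symmetric functions of the Galois conjugates of $f$ are bounded invariant functions descending to $\mathbb{A}^{n,\mathrm{rig}}$, where the explicit description $\cO(\mathbb{A}^{n,\mathrm{rig}})=\bigcap_k K\langle p^kx_1,\ldots,p^kx_n\rangle$ shows boundedness kills every non-constant coefficient; $f$ then satisfies a monic polynomial with constant coefficients and is itself constant. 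Your reduction to curves, compactification, Riemann removable singularity on a punctured disc, and Kiehl/GAGA finiteness of $H^0$ of a proper curve is the rigid transcription of the classical complex-analytic proof; it works, but it imports heavier inputs (the curve-through-two-points lemma, smooth compactification, the properness theorem) where the paper needs only a power-series computation and elementary Galois theory. Three points to tighten if you write this up: (i) ``$g$ separates two closed points'' should be argued by fixing $p_0$, extending $K$ so that $p_0$ is rational, and noting that $g-g(p_0)$ cannot vanish at every classical point (reducedness plus density of classical points), which produces $q$; (ii) the curve-through-two-points fact is standard for quasi-projective varieties, so first restrict to an affine open containing both points (the paper implicitly makes the same reduction to the affine case); (iii) your smooth compactification after a finite extension tacitly assumes $K$ perfect --- harmless in the paper's mixed-characteristic applications, though the lemma is stated for general $K$ (the paper's own Galois-closure step has an analogous separability caveat in characteristic $p$).
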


\begin{proof}
  It suffices to prove the following:
  \begin{claim}
    Let \(X=\Spec R\) be an affine integral scheme of finite type over \(K\).
    Then every bounded analytic function on \(X^{\mathrm{rig}}\) is a constant.
  \end{claim}

  We achieve this in 2 steps.

  Step 1: First, suppose that $R=K[x_1,\ldots,x_n]$. We have to prove that every
  bounded analytic function on \(\mathbb{A}^{n,\mathrm{rig}}_K\) is a constant.
  Recall that \(\mathbb{A}^{n,\mathrm{rig}}_K\) is given by inductive limit of
  \[
    \Sp K \langle x_1,\ldots,x_n \rangle \hookrightarrow \Sp K \langle
    x_1,\ldots,x_n \rangle \hookrightarrow \cdots \hookrightarrow \Sp K \langle
    x_1,\ldots,x_n \rangle \hookrightarrow \cdots,
  \]
  so the set of analytic functions on \(\mathbb{A}^{n,\mathrm{rig}}_K\) is given
  by
  \[
    \bigcap_{k \in \mathbb{N}} K \langle p^k x_1,\ldots,p^k x_n
    \rangle=\left\{ \sum_{I}a_I x^I | \lim_{I\to\infty}a_I p^{-k|I|}=0 \text{
        for all } k \in
    \mathbb{N} \right\}.
  \]
  The boundedness of such a function translates to the existence of a constant
  $C>0$ such that
  \[
    |a_I p^{-k|I|}| \leq C, \text{for all } k \in \mathbb{N},
    I=(i_1,\ldots,i_n).
  \]
  Therefore we get that each coefficient \(a_I\) must be zero except for
  \(I=(0,\ldots,0)\), and our function is constant as desired.

  Step 2: Choose an arbitrary $R$ as in the claim. By Noether normalization, we
  may assume that \(R\) is a finite algebra over \(K[x_1,\ldots,x_n]\). We claim
  that we can even assume that \(R\) is the integral closure of
  \(K[x_1,\ldots,x_n]\) in \(\Frac(R)\) and that \(\Frac(R)/
  \Frac(K[x_1,\ldots,x_n])\) is Galois with Galois group \(G\). Indeed, we only
  have to worry when \(\mathrm{Char}(K) = p > 0\). In that situation, by
  possibly passing to a finite inseparable extension of \(K\), we may find \(n
  \in \mathbb{N}\) such that 
  \(S_n \coloneqq \Frac(R) \otimes_{\Frac(K[x_1,\ldots,x_n])}
  \Frac(K[x_1^{1/p^n},\ldots,x_n^{1/p^n}])\) is separable over
  \(\Frac(K[x_1^{1/p^n},\ldots,x_n^{1/p^n}])\). Then we may dominate \(R\) by
  \(R'\), its integral closure in the Galois closure of one of the components of
  \(\mathrm{Spec}(S_n)\) over
  \(\Frac(K[x_1^{1/p^n},\ldots,x_n^{1/p^n}])\). Now any bounded analytic
  function pullback on \(\mathrm{Spec}(R)^{\mathrm{rig}}\) to a bounded analytic
  function on \(\mathrm{Spec}(R')^{\mathrm{rig}}\). If the latter is locally
  constant, then so is the former since \(\mathrm{Spec}(R')^{\mathrm{rig}}\)
  surjects onto \(\mathrm{Spec}(R)^{\mathrm{rig}}\).

  Step 3: Now assume that \(R\) is the integral closure of \(K[x_1,\ldots,x_n]\)
  in \(\Frac(R)\) and that \(\Frac(R)\) over \(\Frac(K[x_1,\ldots,x_n])\) is Galois with
  Galois group \(G\). Let \(f\) be a bounded analytic function on
  \(X^{\mathrm{rig}}\), and consider the functions
  \[
    a_i=\sum_{S \subset G,|S|=i} \prod_{g \in S} g(f)
  \]
  where \(g(f)(x)=f(g(x))\). It is easy to see that \(a_i\)'s are
  \(G\)-invariant, hence they are analytic functions on
  \(\mathbb{A}^{n,\mathrm{rig}}_K\). They are bounded functions, so by Step 1
  they are constants. Since \(f\) satisfies the equation
  \[
    f^n-a_1f^{n-1}+\cdots+{(-1)}^n a_n=0,
  \]
  we then see that \(f\) is a (locally) constant function as well.
\end{proof}

\begin{cor}
  Let $K$ be a discretely valued non-archimedean field, and $X$ be an
  $\mathbb{A}^1$-connected rigid variety over $K$. Then the Albanese of $X$ is
  trivial.
\end{cor}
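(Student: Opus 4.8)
The plan is to leverage the preceding Proposition, which says any morphism from $\mathbb{A}^{1,\mathrm{rig}}_K$ to an abeloid variety is constant, and promote it to a statement about $\mathbb{A}^1$-connected rigid varieties. First I would recall what $\mathbb{A}^1$-connectedness should mean: any two points of $X$ can be joined by a finite chain of images of $\mathbb{A}^{1,\mathrm{rig}}_K$ inside $X$ (mapping in via morphisms of rigid spaces), so that $X$ is ``generated'' by copies of the affine line in a connectedness sense. The key observation is that the Albanese map $\Alb: X \to \Alb_X$ composed with any such chart $\mathbb{A}^{1,\mathrm{rig}}_K \to X$ yields a morphism $\mathbb{A}^{1,\mathrm{rig}}_K \to \Alb_X$ to an abeloid variety, which by the Proposition must be constant.

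Carrying this out: fix the rational point $x$ used to define $\Alb_X$, and consider the set $Z \subseteq X$ of points $y$ with $\Alb(y) = \Alb(x) = 0$. By the $\mathbb{A}^1$-connectedness hypothesis, given any $y \in X$ there is a chain $C_1, \dots, C_n$ of copies of $\mathbb{A}^{1,\mathrm{rig}}_K$ mapping to $X$, with $x$ in the image of $C_1$, $y$ in the image of $C_n$, and consecutive images meeting. Restricting $\Alb$ to each $C_i$ gives a morphism $\mathbb{A}^{1,\mathrm{rig}}_K \to \Alb_X$, which is constant by the previous Proposition. Since consecutive $C_i$'s have overlapping images, the constant values agree along the chain, and since the value on $C_1$ is $0$ (as it meets $x$), we conclude $\Alb(y) = 0$. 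Hence $\Alb$ is the constant map to $0 \in \Alb_X$.

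It remains to deduce $\Alb_X = 0$ from the fact that $\Alb$ is constant. For this I would invoke the universal property: the Albanese is the initial object in the category of pointed maps from $(X,x)$ to pointed abeloid varieties, and the constant map $(X,x) \to (0,0)$ into the trivial abeloid variety receives a unique pointed morphism from $\Alb_X$. But the structure map $(X,x)\to(\Alb_X,0)$ factors through the constant map $X \to \{0\} \subseteq \Alb_X$ followed by the inclusion, so by the universal property $\Alb_X$ must be isomorphic to the trivial group (the image of $\Alb$ generates $\Alb_X$ as an abeloid variety, and that image is a point). This yields $\Alb_X = 0$.

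The main obstacle I anticipate is making precise the notion of $\mathbb{A}^1$-connectedness and, relatedly, the claim that ``the image of $\Alb$ generates $\Alb_X$'': one needs to know that $\Alb_X$ is the smallest abeloid variety through which $\Alb$ factors, i.e.\ that $\Alb(X)$ is not contained in any proper abeloid subvariety. This follows from the universal property (if it were, that subvariety would receive the factorization, contradicting initiality — or rather, one would get a retraction splitting off a quotient), but spelling out the argument cleanly in the rigid analytic setting, where one must be careful about connected components and the behavior under the finite base extensions implicit throughout, is the step requiring genuine care. Everything else is a formal chase through the universal property together with the already-established rigidity of maps out of the affine line.
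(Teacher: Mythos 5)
Your argument is correct and is essentially the deduction the paper intends --- the corollary is stated there without proof as an immediate consequence of the preceding Proposition on maps from $\mathbb{A}^{1,\mathrm{rig}}_K$ to abeloid varieties, combined with exactly the chain argument and universal-property chase you describe. The final step you flag as delicate is in fact immediate: once $\Alb$ is the constant map to $0$, both $\mathrm{id}_{\Alb_X}$ and the zero endomorphism factor $\Alb$ through itself, so the uniqueness clause of the initial-object property forces $\mathrm{id}_{\Alb_X}=0$, i.e.\ $\Alb_X=0$, with no need to argue that the image generates.
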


\end{example}

We illustrate the failure of ``integrating 1-forms'' through the following
example.

\begin{example}
\label{example}
Let $A$ be a simple abeloid variety of dimension $d$ over a non-archimedean
field $K$. Choose a non-torsion point $P \in A$. Let
$Y=(\mathbb{A}^2_K-\{(0,0)\}) \times A$. Consider a $\mathbb{Z}$-action on $Y$
given as dilation by some topologically nilpotent element belonging to
$K^{\times}$ on the first factor and translation by $P$ on the second factor.
This action is properly discontinuous. Take $X=Y/\mathbb{Z}$. Projection to the
first factor makes $X$ into an isotrivial family of abeloids over a Hopf surface
$H$. One can compute the Hodge numbers of $X$ via Leray spectral sequence
applied to this projection. For example, we have $h^{1,0}(X)=d$ and
$h^{0,1}(X)=d+1$. But we make the following

\begin{claim}
  The Albanese of $X$ is trivial, i.e.~there is no nontrivial map from $X$ to
  any non-zero abeloid variety.
\end{claim}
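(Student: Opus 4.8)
The plan is to show any pointed morphism $\phi\colon (X,x)\to(\mathcal{A}',0)$ to an abeloid variety is constant by analyzing the two factors of the cover $Y=(\mathbb{A}^2_K-\{(0,0)\})\times A$. First I would pull $\phi$ back to $Y$, obtaining a morphism $\psi\colon Y\to\mathcal{A}'$ that is $\mathbb{Z}$-equivariant, where $\mathbb{Z}$ acts on $\mathcal{A}'$ through the composite $\mathbb{Z}\to\Alb_X\to$ (translations of $\mathcal{A}'$) — concretely, the deck transformation $\tau$ of $Y$ satisfies $\psi\circ\tau=t_Q\circ\psi$ for a fixed $Q=\widetilde{\phi}(\text{image of }1)\in\mathcal{A}'$. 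Since $\mathbb{A}^2_K-\{(0,0)\}$ is the analytification of a $K$-variety (it is quasi-affine, being the complement of a point in $\mathbb{A}^2$; more precisely it is covered by the two affine opens where $x\neq0$ and $y\neq0$), restricting $\psi$ to a slice $(\mathbb{A}^2_K-\{(0,0)\})\times\{a\}$ and composing with any of the coordinate projections that present $\mathcal{A}'$ locally, or better, arguing as in the proof of the proposition on maps from $\mathbb{A}^{1,\mathrm{rig}}$ to abeloids: the restriction of $\psi$ to each affine chart of $\mathbb{A}^2_K-\{(0,0)\}$ times a point is a map from the analytification of a $K$-variety to $\mathcal{A}'$, which must land in an affinoid by the Néron-model argument and hence be constant by Lemma~\ref{Liouville}. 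Therefore $\psi$ factors through the projection $Y\to A$, say $\psi=h\circ\mathrm{pr}_2$ for some morphism $h\colon A\to\mathcal{A}'$.

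Next I would use equivariance to constrain $h$. The deck transformation acts trivially on the $A$-coordinate up to translation by $P$, so $\psi\circ\tau=h\circ t_P\circ\mathrm{pr}_2$, while on the other hand $\psi\circ\tau=t_Q\circ h\circ\mathrm{pr}_2$; hence $h\circ t_P=t_Q\circ h$ as maps $A\to\mathcal{A}'$. Writing $h(a)=h_0(a)+c$ with $h_0$ a homomorphism (after translating so $h(0)=c$, using that any morphism of abeloids is a homomorphism followed by a translation — the abeloid analogue of the classical fact, available via the references cited in the excerpt), the relation becomes $h_0(P)=Q$. So far this does not force $h$ constant; the point is that $\mathcal{A}'$ only sees $X$, not $Y$, so I must now descend. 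The map $\psi=h_0\circ\mathrm{pr}_2+c\colon Y\to\mathcal{A}'$ descends to $X=Y/\mathbb{Z}$ exactly when it is $\mathbb{Z}$-invariant, i.e. when $Q=0$, i.e. when $h_0(P)=0$. But $h_0\colon A\to\mathcal{A}'$ is a homomorphism of abeloids and $A$ is simple, so either $h_0$ is constant (i.e. zero) — in which case $\psi$ is constant and we are done — or $h_0$ is an isogeny onto its image, an abeloid subvariety of $\mathcal{A}'$; in the latter case $h_0(P)=0$ forces $P$ to lie in the finite kernel of the isogeny, contradicting that $P$ is non-torsion. Hence $h_0=0$ and $\phi$ is constant.

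The main obstacle, and the step deserving the most care, is the very first one: verifying that the restriction of $\psi$ to $(\mathbb{A}^2_K-\{(0,0)\})\times\{a\}$ is constant. One cannot directly invoke the $\mathbb{A}^{1,\mathrm{rig}}$ result since $\mathbb{A}^2-\{(0,0)\}$ is not $\mathbb{A}^1$-connected through $a$ in an obvious parametrized way, but it \emph{is} the analytification of the quasi-affine $K$-variety $U=\mathbb{A}^2_K\setminus\{0\}$, and $U=\Spec K[x,y,y^{-1}]\cup\Spec K[x,x^{-1},y]$. I would run the Néron-model argument of the earlier proposition on each of these two affine pieces: a morphism from an affine $K$-variety's analytification to $\mathcal{A}'$ lifts (after finite base change, using topological simple-connectedness of affine space and that the torus part has no nonconstant maps in, exactly as in that proof) to a map to the abelian-scheme part $B'$, which by the Néron mapping property extends over a smooth formal model whose special fiber is the corresponding $\mathbb{F}_\kappa$-variety, mapping to the abelian scheme $\mathcal{B}'$; since the special fiber is a rational (in fact affine) variety, this is constant, so the original map lands in an affinoid and Lemma~\ref{Liouville} finishes it. Then the two locally constant pieces agree on the overlap, so $\psi|_{U^{\mathrm{rig}}\times\{a\}}$ is constant; letting $a$ vary and using connectedness of $A$ gives $\psi=h\circ\mathrm{pr}_2$. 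I would also need to handle the "possibly after finite base change" caveat uniformly — but as in the earlier proofs this is harmless since being constant can be checked after extending $K$.
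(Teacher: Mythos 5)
Your overall route is not the paper's primary one: the paper proves the claim by identifying $\Pic^0_X$ as the $\mathbb{G}_m$-extension of $\hat{A}$ classified by the non-torsion point $P\in A=\mathrm{Ext}^1(\hat A,\mathbb{G}_m)$ and using simplicity of $\hat A$ to rule out any abeloid subgroup (the Albanese being the dual of the maximal connected proper smooth subgroup). What you propose is instead a worked-out version of the ``alternative argument due to Johan de Jong'' sketched at the end of the example, and its second half --- factoring $\psi$ through $A$, writing $h=t_c\circ h_0$, extracting $h_0(P)=0$ from descent to $X=Y/\mathbb{Z}$, and concluding $h_0=0$ from simplicity plus $P$ non-torsion --- is correct (though note the pullback of $\phi$ to $Y$ is strictly $\mathbb{Z}$-invariant, so your $Q$ is $0$ from the outset rather than something to be solved for).

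The genuine gap is in the step you yourself flag as the delicate one. Your intermediate claim --- that any morphism from the analytification of an affine $K$-variety to an abeloid variety is constant --- is false: $\mathbb{G}_m^{\mathrm{rig}}\to\mathbb{G}_m^{\mathrm{rig}}/q^{\mathbb{Z}}$ onto a Tate curve is non-constant. Concretely, your chart $\{y\neq 0\}\cong\mathbb{A}^1\times\mathbb{G}_m$ admits the non-constant map $(x,y)\mapsto y \bmod q^{\mathbb{Z}}$ to an abeloid, so the chart-by-chart argument cannot possibly establish the constancy you need. The failure point in your own mechanism is the parenthetical ``the torus part has no nonconstant maps in'': that is exactly what breaks, since $y$ is an invertible analytic function on this chart, and the lattice $\Gamma'$ may meet the torus $T'$ (as it does for a Tate curve), so a non-constant map into a fiber of $G'\to B'$ can survive the quotient to $\mathcal{A}'$. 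The repair is precisely the route you dismissed: $\mathbb{A}^2-\{(0,0)\}$ \emph{is} $\mathbb{A}^1$-connected --- any two points lie on affine lines avoiding the origin, and two such non-parallel lines meet away from the origin, giving a chain of length two --- so the Proposition on maps from $\mathbb{A}^{1,\mathrm{rig}}$ (where the torus part genuinely receives only constant maps, since invertible entire functions on $\mathbb{A}^{1,\mathrm{rig}}$ are constants) together with the Corollary on $\mathbb{A}^1$-connected varieties gives constancy of $\psi$ on each slice $(\mathbb{A}^2-\{(0,0)\})\times\{a\}$ directly; no parametrized family of lines is needed, since constancy on all classical points of a reduced connected rigid space already forces the morphism to be constant. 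With that substitution your argument goes through.
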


\begin{proof}
  By our construction it suffices to show that there is no abeloid variety
  embedded as a subgroup of $\Pic^0_X$. The fibration 
  \[
    \xymatrix{
      A \ar[d] \\
      X \ar[r] & H \\
    }
  \]
  gives an exact sequence
  \[
    0 \to \mathbb{G}_m \to \Pic^0_X \to \hat{A} \to 0
  \]
  which exhibits $\Pic^0_X$ as the complement of the zero locus in the total
  space of a translation invariant line bundle $\mathcal{L}$ on $\hat{A}$
  (c.f.~\cite[Section 6.1]{Lut16}). Moreover, this translation invariant line
  bundle corresponds exactly to $P \in \hat{\hat{A}}=A$. Now, a morphism from an
  abeloid variety $\mathcal{A}$ to $\Pic^0_X$ is equivalent to the data of a
  homomorphism $f: \mathcal{A} \to \hat{A}$ and an isomorphism $s:
  \cO_{\mathcal{A}} \to f^*\mathcal{L}$. But since $\hat{A}$ is simple and
  $\mathcal{L}$ is non-torsion, such a morphism must be $0$. Therefore we
  conclude that there is no nontrivial connected proper subgroup in $\Pic^0_X$.
\end{proof}

An alternative argument due to Johan de Jong demonstrates that there is no
non-constant morphism from $Y$ (coming from $X$) to an abeloid variety. Indeed,
one notices that there is no non-constant morphism from a Hopf surface to an
abeloid variety (c.f. Example~\ref{Hopf}). Therefore any morphism $Y \to
\mathcal{A}$ must factor through $A$. But since such a morphism comes from $X$
it has to be invariant under translation by $P$. Thus we conclude that the
Albanese of $X$ above is trivial.
\end{example}

\section*{Acknowledgment}
The second named author would like to thank his advisor Professor Johan de Jong
for many helpful discussions during the preparation of this paper. He would also
like to thank Zijian Yao, Dingxin Zhang and Yang Zhou for discussing things
related to this paper.

We thank various anonymous referees heartily for providing many valuable
comments and suggestions concerning previous drafts of this article.


\end{document}